  \newcommand{\Addresses}{{% additional braces for segregating \footnotesize
  \bigskip
  \footnotesize

\textsc{London School of Geometry and Number Theory, UCL, Department of Mathematics, Gower street, WC1E 6BT, London, UK}\par\nopagebreak
  \textit{E-mail address}, G.~Baldi: \texttt{gregorio.baldi.16@ucl.ac.uk}

}}
\newcommand{\Fq}{\mathbb{F}_q}
\theoremstyle{plain}
\newtheorem{thm}{Theorem}[section]
\newtheorem{conj}[thm]{Conjecture}
 \newtheorem{lemma}[thm]{Lemma}
 \newtheorem{prop}[thm]{Proposition}
 \newtheorem{rmk}[thm]{Remark}
 \numberwithin{equation}{section}
\DeclareMathOperator{\Hom}{Hom}
\DeclareMathOperator{\tr}{tr}
\newcommand{\Z}{\mathbb{Z}}
\newcommand{\Q}{\mathbb{Q}}
\newcommand{\Oo}{\mathcal{O}}
\newcommand{\F}{\mathcal{F}}
\newcommand{\C}{\mathbb{C}}
\newcommand{\Qbar}{\overline{\mathbb{Q}}}
\newcommand{\Ff}{\mathbb{F}}
\newcommand{\frob}{\operatorname{Frob}}
  \newcommand\blfootnote[1]{%
  \begingroup
  \renewcommand\thefootnote{}\footnote{#1}%
  \addtocounter{footnote}{-1}%
  \endgroup
}
\def\subtitle#1{\gdef\@subtitle{#1}}
\def\@subtitle{}
\begin{document}

\title{Some remarks on motivical and derived invariants}\blfootnote{\emph{Date}. 10 October, 2019.}\blfootnote{\emph{2010 Mathematics Subject Classification}. 14F05, 11G25.}\blfootnote{\emph{Key words and phrases}. Derived category, Motives, Hasse--Weil Zeta function.}
\author{Gregorio Baldi}

\begin{abstract}
We discuss several conjectures about derived equivalent varieties, defined over fields of arbitrary characteristics, and implications among them. In particular we show that the (conjectural) derived invariance of the Hasse--Weil Zeta functions of smooth projective varieties over finite fields, implies the derived invariance of the Hodge diamond of complex algebraic varieties. 
\end{abstract}
\maketitle

%\tableofcontents
\section{Introduction}
Let $K$ be an arbitrary field. We say that $X/K$ is a \emph{nice variety} if it is a smooth projective geometrically irreducible scheme of finite type over $K$. By $\overline{X}$ we denote the base change of $X$ to a fixed algebraic closure $\overline{K}$ of $K$. We study the bounded derived category of coherent sheaves on $X$, denoted by $D^b(X)$. Let $X,Y/K$ be nice varieties, by a \emph{functor} $D^b(X)\to D^b(Y)$ we say a $K$-linear exact functor. We say that two varieties $X,Y/K$ are \emph{derived equivalent} if there exists a functor $D^b(X)\to D^b(Y)$ which is an equivalence of categories. We are interested in \emph{derived invariants}, i.e. properties that derived equivalent nice varieties $X,Y/K$ have to share. For example it is easy to see that the dimension is a derived invariant for nice varieties (see for example \cite[Proposition 4.1]{book} and \cite[Proposition 6.18]{book} where it is proven that the numerical Kodaira dimension is also a derived invariant). 

\subsection{Conjectural derived invaraints}\label{conj}
This short note grew up as an attempt to understand the following far reaching conjecture (\cite[Conjecture 1]{MR2225203}).
\begin{conj}[Orlov]\label{conjmotives}
Derived equivalent nice varieties over a field $K$, have isomorphic $K$-Chow motives.
\end{conj}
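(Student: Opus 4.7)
The natural starting point is Orlov's representability theorem: any $K$-linear exact equivalence $\Phi\colon D^b(X) \to D^b(Y)$ between nice varieties is isomorphic to a Fourier--Mukai transform $\Phi_{\mathcal{P}}$ with kernel $\mathcal{P} \in D^b(X \times_K Y)$, and the quasi-inverse is likewise given by a kernel $\mathcal{Q}$ satisfying $\mathcal{P} \circ \mathcal{Q} \simeq \mathcal{O}_{\Delta_Y}$ and $\mathcal{Q} \circ \mathcal{P} \simeq \mathcal{O}_{\Delta_X}$. This is classical over a field of characteristic zero and extends, with some technical care, to positive characteristic. From the kernel one manufactures a candidate motivic correspondence via the Mukai vector
\[
v(\mathcal{P}) := \operatorname{ch}(\mathcal{P}) \cdot \sqrt{\operatorname{td}(X \times Y)} \;\in\; CH^{*}(X \times Y)_{\mathbb{Q}},
\]
and the Grothendieck--Riemann--Roch theorem, applied to the convolution identities above, shows that $v(\mathcal{P})$ and $v(\mathcal{Q})$ act as mutually inverse correspondences on rational Chow groups. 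One hopes to upgrade this to an isomorphism $h(X) \simeq h(Y)$ in the category of $K$-Chow motives.

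The essential obstacle is that $v(\mathcal{P})$ is \emph{not} homogeneous: it decomposes into pieces in every codimension from $0$ to $\dim(X \times Y)$, so \emph{a priori} it defines a correspondence of mixed degree rather than a morphism of pure motives in the classical sense. To close the gap one must either (i) extract the pure codimension-$\dim Y$ component of $v(\mathcal{P})$ and prove that it alone is an invertible correspondence, which is delicate because the Chern character does not respect the grading in any obvious way; or (ii) construct a different correspondence from extra structure carried by $\Phi$, for example its compatibility with the Serre functor, the induced action on Hochschild homology, or a Bridgeland stability condition matched by $\Phi$.

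A natural complementary strategy is to stratify by Kodaira dimension: when $\omega_X$ or $\omega_X^{-1}$ is ample, the Bondal--Orlov reconstruction theorem already forces $X \simeq Y$, trivialising the conjecture, so the real difficulty is concentrated in the intermediate-Kodaira-dimension and Calabi--Yau regimes, where non-trivial derived equivalences genuinely occur (abelian varieties, K3 surfaces, Mukai flops, \ldots). I expect the hardest step to be (i): pinning down which codimension-$\dim Y$ cycle should serve as the motivic correspondence, and verifying that its counterpart built from $v(\mathcal{Q})$ is its inverse \emph{modulo rational equivalence} rather than only in cohomology; separate issues about integral versus rational coefficients, and about descending from $\overline{K}$-motives to $K$-motives, then have to be addressed to obtain the statement exactly as formulated.
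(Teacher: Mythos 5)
The statement you have attempted to ``prove'' is Conjecture~\ref{conjmotives}, which is an \emph{open conjecture} due to Orlov; the paper does not prove it and does not claim to. The paper only studies implications among conjectures (in particular, Theorem~\ref{main} shows Conjecture~\ref{conjzeta} $\Rightarrow$ Conjecture~\ref{conjhodge}) and records the special cases in which Conjecture~\ref{conjmotives} is known (curves of genus $\neq 1$, varieties with ample or anti-ample canonical bundle, abelian varieties, surfaces). So there is no ``paper's own proof'' to compare against, and your text should not be read as a candidate proof of the conjecture.

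That said, as a survey of the state of the art your proposal is accurate: Orlov's representability theorem supplies a Fourier--Mukai kernel $\mathcal{P} \in D^b(X \times_K Y)$, the Mukai vector $v(\mathcal{P}) \in CH^*(X\times Y)_{\mathbb{Q}}$ gives an invertible action on rational Chow groups, and the genuine obstruction --- which you correctly identify --- is that $v(\mathcal{P})$ is a mixed-degree correspondence, so it does not by itself produce an isomorphism $h(X)\simeq h(Y)$ in the category of pure Chow motives. Nothing in your argument closes that gap: you do not exhibit the required codimension-$\dim Y$ cycle, nor show that some homogeneous piece of $v(\mathcal{P})$ is invertible modulo rational equivalence, and options (i) and (ii) are offered as directions rather than carried out. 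To be consistent with the paper you should also note that the weaker homological/numerical versions of the statement are what become accessible under standard conjectures (Tate, semisimplicity), which is exactly what Propositions~\ref{prop1} and~\ref{prop2} establish; the Chow-level statement remains out of reach even conditionally.
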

We briefly recall how the category of $K$-Chow motives is constructed, for more details we refer to \cite{MR1265529}. Let $\mathcal{V}_K$ be the category of nice $K$-varieties. Given $ X \in \mathcal{V}_K$ and an integer $d$ we denote with $\mathcal{Z}^{d}(X) $ the free abelian group generated by irreducible sub-varieties of $X$ of codimension $d$ and by $\mathcal{A}^d(X)$ the quotient of $\mathcal{Z}^{d}(X)\otimes\Q$ by rational equivalence. Given $X, Y \in \mathcal{V}_K$, we define $\text{Corr}^r(X,Y)$, the group of correspondences of degree $r$ from $X$ to $Y$ as follows. Assuming for simplicity that $X$ is purely $d$-dimensional we set
\begin{displaymath}
\text{Corr}^r(X,Y): = \mathcal{A}^{d+r}(X \times Y).
\end{displaymath}
Thanks to intersection theory, we can define a composition law:
\begin{displaymath}
\text{Corr}^r(X,Y) \times \text{Corr}^s(Y,Z)\to \text{Corr}^{r+s}(X,Z).
\end{displaymath}

We denote with $\text{Corr}_K$ the category whose objects are the objects of $\mathcal{V}_K$ and $\text{Corr}^r(X,Y)$ as the Hom-set between $X$ and $Y$. This is an additive, $\Q$ linear, tensor category, equipped with a tensor functor
\begin{displaymath}
h: \mathcal{V}_K^{\text{op}}\to \text{Corr}_K .
\end{displaymath}
Finally we define the category of $K$-Chow motives as the pseudo-abelian envelope of $\text{Corr}_K$, i.e. objects are triples $(X,i,n)$ where $X$ is an object of $\mathcal{V}_K$, $n$ is an integer and $i$ an idempotent element of $\text{Corr}^0(X,X)$, and
\begin{displaymath}
\Hom_{K-\text{Chow motives}}((X,i,n), (Y,j,m)):=i \  \text{Corr}^{m-n}(X,Y)j \subset \text{Corr}^*(X,Y).
\end{displaymath}
If $X,Y/K$ are nice varieties, we say that they have isomorphic Chow motives, as in Conjecture \ref{conjmotives}, if $h(X):=(X,\text{id}_X,0)$ and $h(Y):=(Y, \text{id}_Y,0)$ are isomorphic in the category of $K$-Chow motives we have just constructed.

\begin{rmk}
In the construction of $K$-Chow motives, it is important to define correspondences as Chow groups tensored with the rational numbers. The reason is that, for example, the derived category of an abelian variety is only an isogeny invariant and so Conjecture \ref{conjmotives} would be false integrally.
\end{rmk}

Even if Orlov's conjecture has no restrictions on the base field $K$, in this note we only consider the case where $K$ is a finite field of characteristic $p$ or a field of characteristic zero. Thanks to the Lefschetz principle, in Algebraic Geometry it is natural to consider only Conjecture \ref{conjmotives} when $K \subset \C$.

From the construction of $K$-Chow motives, we see that if $h(X)$ and $h(Y)$ are isomorphic, then $H^*(X)$ is isomorphic to $H^*(Y)$ for any \emph{reasonable} cohomology theory $H^*$. In particular Conjecture \ref{conjmotives} predicts the following:
\begin{conj}\label{conjzeta}
Derived equivalent nice varieties defined over finite fields have the same Hasse--Weil Zeta function.
\end{conj}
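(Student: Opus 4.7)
The plan is to deduce Conjecture \ref{conjzeta} from Conjecture \ref{conjmotives} via the $\ell$-adic realization of Chow motives. Let $K = \Ff_q$ and fix an auxiliary prime $\ell \neq p$. First I would invoke the standard $\ell$-adic realization functor, which sends $h(X) = (X, \id_X, 0)$ to the graded Galois module $\bigoplus_i H^i_{\et}(\overline{X}, \Ql)$ and a correspondence $\alpha \in \text{Corr}^r(X,Y)$ to the operator
$$\alpha_\ast(-) := (p_Y)_\ast\bigl(\text{cl}(\alpha) \smile p_X^\ast(-)\bigr)\colon H^i_{\et}(\overline{X}, \Ql) \longrightarrow H^{i+2r}_{\et}(\overline{Y}, \Ql)(r).$$
Compatibility with composition of correspondences is a standard consequence of the projection formula and proper base change for $\ell$-adic cohomology, so this assignment extends to an additive functor from the category of $K$-Chow motives to that of graded continuous $\Ql$-representations of $\Gal(\overline{K}/K)$; since $\text{cl}(\alpha)$ is defined over $\Ff_q$, each $\alpha_\ast$ commutes with the action of geometric Frobenius.

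Next I would apply this realization to the motivic isomorphism supplied by Conjecture \ref{conjmotives}. If $X, Y/\Ff_q$ are derived equivalent nice varieties, then $h(X) \simeq h(Y)$ in the category of $K$-Chow motives. The underlying isomorphism is represented by an element of $\text{Corr}^0(X,Y)$ and so has cohomological degree zero; its realization is therefore a collection of Frobenius-equivariant isomorphisms
$$H^i_{\et}(\overline{X}, \Ql) \;\simeq\; H^i_{\et}(\overline{Y}, \Ql), \qquad i \geq 0.$$
In particular, the characteristic polynomials of geometric Frobenius on these cohomology groups agree degree by degree.

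Finally, I would conclude using the Grothendieck--Lefschetz trace formula, which for a nice variety $X/\Ff_q$ expresses the Hasse--Weil zeta function as
$$Z(X,t) \;=\; \prod_{i=0}^{2\dim X} \det\bigl(1 - t \cdot \frob_q \mid H^i_{\et}(\overline{X}, \Ql)\bigr)^{(-1)^{i+1}},$$
and similarly for $Y$. Matching each factor in the alternating product yields $Z(X,t) = Z(Y,t)$, as desired.

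The argument is essentially formal, so there is no genuine obstacle beyond unpacking the standard machinery. The one point that deserves emphasis is that an isomorphism $h(X) \simeq h(Y)$ automatically respects the cohomological grading, because it is realized by a correspondence of degree zero; consequently the implication does \emph{not} require the Künneth standard conjecture nor the existence of the Künneth projectors at the level of rational equivalence. The individual $H^i$ are matched up for free, which is precisely what is needed in order to recover the alternating structure of the zeta function.
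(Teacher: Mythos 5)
Your proposal is correct and follows essentially the same route the paper indicates, namely that a motivic isomorphism $h(X)\simeq h(Y)$ realizes to a Frobenius-equivariant isomorphism of $\ell$-adic cohomology and the zeta function is then read off via the Grothendieck--Lefschetz trace formula; the paper only gestures at this in one sentence ("if $h(X)$ and $h(Y)$ are isomorphic, then $H^*(X)\cong H^*(Y)$ for any reasonable cohomology theory"), so your writeup supplies the details---in particular the correct observation that a degree-$0$ correspondence preserves the cohomological grading, so the $H^i$ match individually without appealing to Künneth projectors---that the paper leaves implicit.
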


Let $p$ be a prime number and $q$ a power of $p$. Denote by $\Fq$ the finite field with $q$ elements. Given a nice variety $X/\Fq$ its Hasse--Weil Zeta function, denote by $Z(X,t)$, is defined as follows:
\begin{displaymath}
Z(X,t):= \exp \left( \sum _{n\geq 1} \frac{|X(\mathbb{F}_{q^n})|}{n} t^n\right)\in \Q[[t]],
\end{displaymath}
where $|X(\mathbb{F}_{q^n})|$ denotes the number of $\mathbb{F}_{q^n}$-points of $X$. Thanks to the Weil conjectures, proven by Deligne \cite[Theorem 1.6]{weil1}, we know that 
\begin{displaymath}
Z(X,t) = \frac{P_1(t) \dots P_{2\dim X -1}(t)}{P_2(t)  \dots P_{2\dim X }(t)} ,
\end{displaymath}
where each $P_i(t)$ lies in $\Z[t]$, has roots of absolute value $1/\sqrt{q}^i$ and $P_i(t)$ is the characteristic polynomial of the Frobenius automorphism of $H^i(\overline{X},\Q_\ell)$, where $\ell \neq p$ is a prime number.

For the same reason, Conjecture \ref{conjmotives} implies that
\begin{conj}\label{conjhodge}
Derived equivalent nice varieties defined over the complex numbers have the same Hodge numbers.
\end{conj}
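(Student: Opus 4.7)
Under the hypothesis that Conjecture~\ref{conjzeta} holds, the strategy is to spread derived equivalent complex varieties $X,Y$ over a finitely generated $\Z$-subalgebra $R \subset \C$, transport the hypothesis to the special fibres over finite fields, and then use $\ell$-adic Hodge theory to turn the resulting constraints on Frobenius eigenvalues into constraints on Hodge numbers.

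By Orlov's representability theorem any $\C$-linear exact equivalence $\Phi : D^b(X) \xrightarrow{\sim} D^b(Y)$ is of Fourier--Mukai type, $\Phi \cong \Phi_\E$ with some kernel $\E \in D^b(X \times Y)$. First I would choose $R \subset \C$ finitely generated over $\Z$, smooth proper models $\calX, \calY \to \Spec R$ of $X$ and $Y$, and a descent $\widetilde{\E} \in D^b(\calX \times_{\Spec R} \calY)$ of $\E$. After shrinking $\Spec R$ one arranges that $\Phi_{\widetilde{\E}}$ is a fibrewise equivalence (the nontrivial input being that ``being a Fourier--Mukai equivalence'' is an open condition in smooth proper families). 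In particular, for every closed point $s \in \Spec R$ the residue field $\kappa(s)$ is finite and the fibres $\calX_s, \calY_s$ are derived equivalent over $\kappa(s)$; Conjecture~\ref{conjzeta} then forces them to share the same zeta function, hence the same $\ell$-adic Frobenius characteristic polynomial $P_i(t)$ in every degree~$i$.

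Now fix a prime $\ell$ and a dense open $U \subset \Spec R[1/\ell]$ over which $\calX, \calY$ are smooth and proper. The higher direct images $R^i f_* \Q_\ell$ and $R^i g_* \Q_\ell$ (where $f,g$ are the structural morphisms) are lisse $\ell$-adic sheaves on $U$, corresponding to representations $\rho_X^i, \rho_Y^i$ of $\pi_1(U)$; smooth and proper base change together with the previous paragraph show that they have the same characteristic polynomial of Frobenius at every closed point of $U$. A Chebotarev-style density argument for lisse $\ell$-adic sheaves on integral normal $\Z$-schemes of finite type then yields an isomorphism of semisimplifications $(\rho_X^i)^{\mathrm{ss}} \cong (\rho_Y^i)^{\mathrm{ss}}$ as $\pi_1(U)$-modules. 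To pass from this Galois-theoretic input to Hodge numbers, choose a closed point $s \in U$ of residue characteristic $\ell$ (which exists for all but finitely many $\ell$, since $\Spec R \to \Spec \Z$ is dominant) and set $K_v = \Frac(\widehat{R}_s)$. Restricting $\rho_X^i$ to the decomposition group $\Gal(\overline{K_v}/K_v)$ gives a crystalline $\ell$-adic representation whose Hodge--Tate decomposition, by Faltings' theorem, reads
\begin{equation*}
H^i_{\et}(X_{\overline{K_v}}, \Q_\ell) \otimes_{\Q_\ell} \C_\ell \;\cong\; \bigoplus_{p+q=i} H^p(X, \Omega^q_X) \otimes_{K_v} \C_\ell(-q),
\end{equation*}
so that the Hodge--Tate weight $q$ appears in $H^i$ with multiplicity $h^{p,q}(X)$. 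Since Hodge--Tate weights are additive in short exact sequences, they are preserved by semisimplification; combined with the previous step this yields $h^{p,q}(X) = h^{p,q}(Y)$ for all $p+q=i$, and varying $i$ recovers the full Hodge diamond.

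The main obstacle, I expect, is the spreading-out step: one has to arrange that the kernel $\E$, a quasi-inverse kernel, and the unit/counit isomorphisms all descend to $\Spec R$, so that $\Phi_{\widetilde{\E}}$ restricts to a genuine equivalence at every closed fibre. This requires relative Fourier--Mukai theory in the sense of Lieblich--Olsson or Antieau--Gepner. The Chebotarev density step over a higher-dimensional base of mixed characteristic is the usual generalisation of the number field version; and the $\ell$-adic Hodge theory input, although black-box, is applied to a single crystalline representation and is clean once the previous steps are in place.
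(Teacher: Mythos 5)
Your strategy uses the same raw inputs as the paper's proof (spreading out, transferring the zeta hypothesis to finite-field fibres, Chebotarev density, Weil conjectures, $p$-adic Hodge theory), where the paper packages the last three as a citation to Ito. But there is a genuine gap at the $p$-adic Hodge theory step, and it stems from trying to work directly over the finitely generated $\Z$-algebra $R$ rather than over the ring of integers of a number field.

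Two problems. First, a literal one: you set $U \subset \Spec R[1/\ell]$ and then choose $s \in U$ of residue characteristic $\ell$, which is impossible since $\ell$ has been inverted. Second, and more seriously, suppose $s$ is a closed point of $\Spec R$ with finite residue field of characteristic $\ell$. Then $\widehat{R}_s$ is a complete local ring of Krull dimension $\dim R$, and this is $>1$ whenever $X,Y$ genuinely involve transcendental coefficients, which is the generic situation for $X,Y/\C$. The field $K_v = \Frac(\widehat{R}_s)$ is then a higher-dimensional local field, and Faltings' theorem on the Hodge--Tate decomposition of crystalline representations is a statement about Galois representations of finite extensions of $\Q_\ell$; it does not apply as a black box here (relative $p$-adic Hodge theory over such bases exists, but is not a plug-in). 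The paper avoids this by a preliminary reduction: it spreads $X,Y$ and the kernel over a variety $S$ defined over a number field $K$, and then invokes Deligne's theorem that Hodge numbers are locally constant in smooth proper families over a characteristic-zero base to replace $X,Y$ by the fibres over a $\overline\Q$-point of $S$. After that the arithmetic base is the one-dimensional $\Oo_K$, $\Frac(\widehat{\Oo_{K,\mathfrak p}})$ is an honest $p$-adic field, and Ito's proposition applies cleanly. Inserting this reduction would repair your argument, at which point it becomes essentially the paper's proof with Ito's lemma unwound rather than a genuinely different route.
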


\begin{rmk}
Conjecture \ref{conjhodge} is often attributed to Kontsievich and is motivated by its relation with Homological Mirror Symmetry (see the introduction of \cite{MR1403918}). Regarding Conjecture \ref{conjzeta}, to the best of our knowledge, it was considered for the first time for K3 surfaces and odd characteristic by Lieblich and Olsson, see \cite[Theorem 1.2]{MR3429474}.
\end{rmk}

In this note we show how tools coming from Arithmetic Geometry can be used to understand Conjecture \ref{conjhodge}, which, at a first sight, has a more analytical flavour (as it involves the existence of the Hodge filtration). Inspired by Ito's paper \cite{ito}, our main theorem is
\begin{thm}\label{main}
Conjecture \ref{conjzeta} implies Conjecture \ref{conjhodge}.
\end{thm}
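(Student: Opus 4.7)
Let $X, Y$ be derived equivalent nice varieties over $\C$ and, by Orlov's representability theorem, let $\mathcal{P}\in D^b(X\times_\C Y)$ be a Fourier--Mukai kernel realising the equivalence. The first step is to \emph{spread out}: choose a finitely generated $\Z$-subalgebra $R\subset \C$ over which $X$, $Y$ and $\mathcal{P}$ admit smooth projective models $\mathcal{X},\mathcal{Y}\to\Spec R$ together with a kernel $\tilde{\mathcal{P}}\in D^b(\mathcal{X}\times_R \mathcal{Y})$; after shrinking $\Spec R$ we may assume that $\tilde{\mathcal{P}}_s$ induces a derived equivalence $D^b(\mathcal{X}_s)\simeq D^b(\mathcal{Y}_s)$ on every geometric fibre. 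In particular, at every closed point $s\in\Spec R$ the reductions $\mathcal{X}_s$, $\mathcal{Y}_s$ are derived equivalent nice varieties over a finite field, so Conjecture \ref{conjzeta} forces their Hasse--Weil zeta functions to coincide.

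\textbf{From zeta functions to local systems.} Fix a prime $p$ and set $U:=\Spec R[1/p]$. By smooth and proper base change, $R^i\pi_{\mathcal{X},*}\Q_p$ and $R^i\pi_{\mathcal{Y},*}\Q_p$ are lisse $\Q_p$-sheaves on $U$, and by the preceding step their Frobenius characteristic polynomials agree at every closed point of $U$. Chebotarev density combined with (geometric) semisimplicity of the monodromy of a smooth proper family yields an isomorphism
$$R^i\pi_{\mathcal{X},*}\Q_p \;\cong\; R^i\pi_{\mathcal{Y},*}\Q_p$$
of $\pi_1^{\et}(U)$-representations for every $i$.

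\textbf{$p$-adic Hodge theory.} Pick a closed point $s_0\in\Spec R$ of residue characteristic $p$ and, by choosing an arithmetic curve through $s_0$, produce a local homomorphism $R_{\mathfrak{m}_{s_0}}\to \Oo$ with $\Oo$ a discrete valuation ring of fraction field $K$ of characteristic zero and residue field a finite extension of $\Ff_p$. Passing to the $p$-adic completion $\hat K$, the families $X_{\hat K}$ and $Y_{\hat K}$ have good reduction; pulling back the isomorphism above along $\Spec\hat K\to U$ gives
$$H^i_{\et}(X_{\overline{\hat K}},\Q_p)\;\cong\;H^i_{\et}(Y_{\overline{\hat K}},\Q_p)$$
as $\Gal(\overline{\hat K}/\hat K)$-representations. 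Both sides are crystalline thanks to good reduction, so the Hodge--Tate decomposition recovers the Hodge numbers as multiplicities of Tate twists in $V\otimes_{\Q_p}\C_p$, and we conclude $h^{p,q}(X_{\hat K})=h^{p,q}(Y_{\hat K})$. Invariance of Hodge numbers in a smooth proper family over a characteristic zero base identifies $h^{p,q}(X_{\hat K})=h^{p,q}(X_\C)$ and likewise for $Y$, completing the proof.

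\textbf{Main obstacle.} The subtle point is the semisimplicity needed to run Chebotarev: one has to know the lisse sheaves $R^i\pi_*\Q_p$ on $U$ are isomorphic, not merely that their semisimplifications are. This can be handled either by invoking Deligne's (geometric) semisimplicity results for local systems arising from smooth proper morphisms, or by noting that since Hodge--Tate weights depend only on the semisimplification, the weaker statement already suffices to extract the Hodge numbers.
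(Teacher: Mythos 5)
Your proof is essentially the paper's argument with Ito's theorem (\cite[Proposition 1.2]{ito}) reproved inline rather than invoked as a black box: spread out the Fourier--Mukai kernel to an arithmetic model, reduce modulo primes to apply Conjecture \ref{conjzeta}, then use Chebotarev density and $p$-adic Hodge theory to recover Hodge numbers from point counts. Two points need tightening. First, the step ``zeta functions agree at every closed point $\Rightarrow$ Frobenius characteristic polynomials agree on each $R^i\pi_*\Q_p$ separately'' requires the Weil conjectures to split the alternating product by weight; without this, the Hodge--Tate decomposition would only recover the Euler characteristics $\chi(X,\Omega^a)=\sum_b(-1)^bh^{a,b}$, not the individual $h^{p,q}$, since the multiplicity of $\C_p(-a)$ in the virtual module $\sum_n(-1)^nH^n$ is the alternating sum $\sum_n(-1)^n h^{a,\,n-a}$. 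Second, of your two proposed fixes for semisimplicity, only the second is safe: Deligne's Weil II semisimplicity is a statement about the \emph{geometric} monodromy and does not give semisimplicity of $R^i\pi_*\Q_p$ as a representation of the full arithmetic $\pi_1^{\et}(U)$ (which is essentially the Tate semisimplicity conjecture). Observing that Hodge--Tate weights depend only on the semisimplification is exactly the correct workaround, and is what Ito's argument does in effect.
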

Finally we remark that, in analogy with Conjecture \ref{conjzeta}, one can also wonder about what kind properties of rational points of nice varieties defined over number fields are invariant under derived equivalences. For more about this we refer the reader to the introduction of \cite{MR3616008}.
\subsection{Comments on Conjecture \ref{conjzeta}}\label{moreonzeta}
One reason why Conjecture \ref{conjzeta} may be easier than Conjectures \ref{conjhodge} and \ref{conjmotives} is that Zeta functions can be defined without invoking the existence of any cohomology theory. As a result of this many different cohomology theories, from which the Hasse--Weil Zeta function can be read, have emerged. For example:
\begin{itemize}
\item $\ell$-adic \'{e}tale cohomology, \cite[Section 4]{MR292838};
\item $p$-adic cohomology, \cite[Theorem 1]{MR0332791};
\item Topological Hochschild Homology, \cite[Theorem A]{MR3807755}.
\end{itemize}
One could hope that the action of Frobenius endomorphism on (the even and odd parts of) the above cohomology theories, or other cohmological theories yet to be discovered, can be proven to be \emph{intrinsic} so that it can be reconstructed just by looking at the bounded derived category. For a recent use of Topological Hochschild homology to derived invariants in postitive characteristic, we refer to \cite{2019arXiv190612267A}.

It is interesting to see the interplay between Conjecture \ref{conjzeta} and Orlov's Conjecture \ref{conjmotives}. To do so we actually need a weaker version of the conjecture, where Chow motives are replaced by homological or numerical ones. Indeed Conjecture \ref{conjmotives} was stated using Chow motives since the rational equivalence on $\mathcal{Z}^d(X)$ is the finest adequate equivalence relation (as introduced in \cite{MR0116010}). However, in the construction of the category of Chow motives outlined in the previous section, we can define $A^d(X)$ as $\mathcal{Z}^d(X)\otimes \Q$ modulo the homological, resp. numerical, equivalence (once a Weil cohomology is fixed) to obtain the homological, resp. numerical $K$-motives. We will prove

\begin{prop}\label{prop1}
Assume the Tate and semisimplicity conjectures, namely \cite[Conjectures $T^i(X)$ and $SS^i(X)$ for all $i$ and all varieties $X$ over a finite field $\Fq$]{MR1265523}). Then Conjecture \ref{conjzeta} implies that derived equivalent varieties over finite fields have isomorphic homological motives (with rational coefficients).
\end{prop}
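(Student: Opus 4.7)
The plan is to reduce the comparison of homological motives of $X$ and $Y$ to a comparison of their $\ell$-adic realisations, for a fixed prime $\ell\neq p$. The key structural input is semisimplicity: under $T^i+SS^i$, homological equivalence coincides with numerical equivalence (Tate's Conjecture $D$), so by Jannsen's theorem the category $\mathcal{M}$ of homological motives over $\Fq$ with $\Q$-coefficients is semisimple abelian over $\Q$.

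First I would extract from $Z(X,t)=Z(Y,t)$ the equality of Frobenius characteristic polynomials on each $H^i(\bar X,\Q_\ell)$. By Deligne's Riemann hypothesis $Z(X,t)=\prod_i P_i(X,t)^{(-1)^{i+1}}$ with $P_i$ having roots of absolute value $q^{-i/2}$; the distinct weights separate the $P_i$'s inside $Z(X,t)$, so $Z(X,t)=Z(Y,t)$ forces $P_i(X,t)=P_i(Y,t)$ for every $i$. Combined with the $SS^i$ hypothesis, two semisimple Frobenius actions with equal characteristic polynomial are conjugate; hence $H^i(\bar X,\Q_\ell)\simeq H^i(\bar Y,\Q_\ell)$ as $\Q_\ell[\frob]$-modules for every $i$.

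Since dimension is a derived invariant, set $d=\dim X=\dim Y$. The Tate conjecture $T^d$ applied to $X\times Y$, combined with Poincaré duality and the Künneth formula for $\ell$-adic cohomology, produces an identification
\[
\Hom_{\mathcal{M}}\bigl(h(X),h(Y)\bigr)\otimes_\Q\Q_\ell\;\xrightarrow{\sim}\;\bigoplus_{i}\Hom_{\frob}\bigl(H^i(\bar X,\Q_\ell),H^i(\bar Y,\Q_\ell)\bigr),
\]
and analogously with $X,Y$ swapped. The Frobenius-equivariant isomorphisms from the previous step, together with their inverses, then correspond to classes $\alpha,\beta$ on the motivic side satisfying $\alpha\circ\beta=\id$ and $\beta\circ\alpha=\id$ (the identities are checked after realisation and pulled back via the above bijection). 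This yields an isomorphism $h(X)\otimes\Q_\ell\simeq h(Y)\otimes\Q_\ell$ in $\mathcal{M}\otimes_\Q\Q_\ell$.

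Finally I would descend from $\Q_\ell$ to $\Q$ using Schur's lemma in the semisimple $\mathcal{M}$. Decompose $h(X)=\bigoplus_k S_k^{a_k}$ and $h(Y)=\bigoplus_k S_k^{b_k}$ into isotypic components over the simple objects $S_k$ of $\mathcal{M}$; then $\Hom_{\mathcal{M}}(h(X),h(Y))$ is block-diagonal, with block $k$ of shape $b_k\times a_k$ over the division algebra $\End S_k$, and an invertible element must be square block-by-block. Since tensoring with $\Q_\ell$ preserves these matrix shapes, an isomorphism over $\Q_\ell$ forces $a_k=b_k$ for every $k$, whereupon the identity on each isotypic piece gives an isomorphism $h(X)\simeq h(Y)$ in $\mathcal{M}$. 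The main obstacle I anticipate is the third step: correctly unravelling Tate's conjecture for $X\times Y$ (a codimension-$d$ statement), via Poincaré duality and Künneth with their Tate twists, into the degree-preserving identification of Hom-spaces displayed above; once that bookkeeping is in place, the rest is a formal semisimplicity argument.
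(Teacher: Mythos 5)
Your argument is correct and follows the same overall route as the paper: separate the $P_i$'s inside $Z(X,t)$ by weight to get $P_i(X,t)=P_i(Y,t)$, use semisimplicity to upgrade this to a Frobenius-equivariant isomorphism $H^i(\overline{X},\Q_\ell)\cong H^i(\overline{Y},\Q_\ell)$ (the paper invokes Brauer--Nesbitt where you say ``semisimple with equal characteristic polynomial, hence conjugate'' --- the same thing), and then feed this into the Tate conjecture for $X\times Y$, interpreted as full faithfulness of the $\ell$-adic realisation on homological motives. The one place where you go beyond the paper is the last step: the paper simply asserts in a parenthetical that one gets an isomorphism of motives ``with $\Q$-coefficients, not only $\Q_\ell$-coefficients,'' whereas you actually justify the descent from $\Q_\ell$ to $\Q$ by invoking Jannsen's semisimplicity (via homological $=$ numerical under $T+SS$) and a Schur's-lemma / isotypic-multiplicity comparison. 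That is a genuine gap in the paper's write-up which your proposal correctly fills; an equally clean alternative is to observe that in any $\Q$-linear category with finite-dimensional $\Hom$-spaces, the locus of isomorphisms in $\Hom(h(X),h(Y))$ is Zariski-open, so having a $\Q_\ell$-point forces a $\Q$-point.
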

Assuming the $\ell$-adic Tate and semisimplicity conjectures for varieties defined over number fields we can also prove the following\footnote{Moonen \cite[Theorem 1]{moontate} has recently proven that, in characteristic zero, the Tate conjecture actually implies the semisimplicity conjecture. To have an analogous statement in positive characteristic one also needs to assume that homological and numerical equivalences agree, as discussed in \cite[Theorem 2]{moontate}.}.
\begin{prop}\label{prop2}
Let $K$ be a number field. Assume the $\ell$-Tate and semisimplicity conjectures, namely \cite[Conjectures $T^j(X)$ and $SS^i(X)$ for all $i$ and all varieties $X$ over $K$]{MR1265523}. Then Conjecture \ref{conjzeta} implies that derived equivalent $K$-varieties have isomorphic homological $K$-motives (with rational coefficients).
\end{prop}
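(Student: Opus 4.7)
The plan is to follow the template of Proposition \ref{prop1}, replacing the direct appeal to the Weil conjectures over $\Fq$ by a Chebotarev-density argument over $K$. First I would spread out: starting from a Fourier--Mukai equivalence $\Phi_P\colon D^b(X)\to D^b(Y)$ with kernel $P\in D^b(X\times_K Y)$, choose smooth projective models $\mathcal{X},\mathcal{Y}$ of $X,Y$ over $R=\mathcal{O}_K[S^{-1}]$ for a suitably large finite set of places $S$, together with a kernel $\mathcal{P}\in D^b(\mathcal{X}\times_R\mathcal{Y})$ extending $P$. After possibly enlarging $S$, one verifies that for every closed point $v\in\Spec R$ the specialization $\mathcal{P}_v$ induces a derived equivalence $D^b(X_v)\simeq D^b(Y_v)$ of the reductions, which are nice varieties over the finite residue field $\kappa(v)$.

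Next I would invoke Conjecture \ref{conjzeta} at each $v\notin S$ to conclude $Z(X_v,t)=Z(Y_v,t)$. Because the Weil conjectures force the polynomials appearing in the Euler factorization at $v$ to have roots of pairwise distinct absolute values across different $i$, each factor $P_{i,v}(t)$ is individually recovered from the Zeta function; by smooth and proper base change I therefore obtain, for each $i$ and every $v\notin S$,
\[
\tr\bigl(\frobv\mid H^i_{\et}(\overline{X},\Q_\ell)\bigr)=\tr\bigl(\frobv\mid H^i_{\et}(\overline{Y},\Q_\ell)\bigr).
\]
By Chebotarev density the characters of the two $\ell$-adic Galois representations then coincide on a set of density one, and by continuity they coincide everywhere; the semisimplicity conjecture $SS^i$ upgrades this to a Galois-equivariant isomorphism $H^i_{\et}(\overline{X},\Q_\ell)\cong H^i_{\et}(\overline{Y},\Q_\ell)$ for each $i$.

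The remaining task is to realize these $\ell$-adic isomorphisms by algebraic correspondences. Via Poincar\'e duality and the Künneth formula, a Galois-equivariant isomorphism $H^i(\overline{X},\Q_\ell)\cong H^i(\overline{Y},\Q_\ell)$ is the same data as a Galois-invariant class in the $(2d_X-i,i)$-Künneth summand of $H^{2d_X}_{\et}(\overline{X\times Y},\Q_\ell(d_X))$. Applying the Tate conjecture $T^{d_X}(X\times Y)$ together with the semisimplicity assumption (which forces homological and numerical equivalence to agree on $X\times Y$), I produce a correspondence $\alpha_i\in\text{Corr}^0(X,Y)\otimes\Q$ whose $\ell$-adic realization is the prescribed isomorphism. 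Since under the same hypotheses the Künneth projectors of $X$ and $Y$ are themselves algebraic, the homological motives split as $h(X)=\bigoplus_i h^i(X)$ and likewise for $Y$; the $\alpha_i$ assemble to a morphism $h(X)\to h(Y)$, and running the same construction for the quasi-inverse Fourier--Mukai equivalence $\Phi_P^{-1}$ provides a two-sided inverse, completing the desired isomorphism of homological $K$-motives.

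I expect the principal obstacle to lie in the spreading-out step: one must check that the Fourier--Mukai kernel extends to an object of $D^b(\mathcal{X}\times_R\mathcal{Y})$ whose specializations are still kernels of derived equivalences at all but finitely many places. Once this input is in place, the Chebotarev + Tate mechanism is just the number-field analogue of the finite-field argument behind Proposition \ref{prop1}, and the only secondary subtlety is extracting a $\Q$-rational cycle from the $\Q_\ell$-linear data furnished by Tate's conjecture — a routine matter under Tate + semisimplicity, since the $\Q$-span of algebraic cycles modulo homological equivalence and the $\Q_\ell$-span of Galois-invariant classes then have equal dimensions.
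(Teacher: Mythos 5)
Your proposal is correct and follows essentially the same route as the paper: spread out over $\Oo_K$, apply Conjecture \ref{conjzeta} to the reductions, separate cohomological degrees via the Weil conjectures and smooth-proper base change, invoke Chebotarev density and semisimplicity to obtain a Galois-equivariant isomorphism of $\ell$-adic cohomology, and then use the Tate conjecture (full faithfulness of the $\ell$-adic realization) to upgrade this to an isomorphism of homological motives. The paper packages the middle portion as a strengthened variant of Ito's Proposition 1.2 that needs only the point counts $|\mathcal{X}(\Oo_K/\mathfrak{p})| = |\mathcal{Y}(\Oo_K/\mathfrak{p})|$ rather than the full Zeta functions, but the substance is identical to what you wrote.
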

It is not hard to see that the above propositions follow form Deligne's weights theory and the Chebotarev density theorem. Since we were not able to locate such proofs in the literature, we decided to included them for completeness and to stress how arithmetic tools can be useful in understanding Conjecture \ref{conjhodge}.

\subsection{A weaker conjecture}
Proving that the Zeta function is a derived invariant amounts to prove that the number of points, over every finite extension of the base field, does. Recall that coherent sheaves over varieties defined on fields of characteristic $p>0$ have again characteristic $p$. So it may be easier to obtain a mod $p$ invariant out $D^b(X)$. We propose the following special case of Conjecture \ref{conjzeta}.
\begin{conj}\label{conjmodp}
Derived equivalent nice varieties defined over finite fields of characteristic $p$ have the same number, modulo $p$, of rational points.
\end{conj}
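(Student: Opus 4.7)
The plan is to reduce the conjecture to showing that a specific alternating trace on coherent cohomology depends only on $D^b(X)$, and then to recognise that trace as extractable from a Hochschild-type derived invariant.

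First, I would apply the Woods Hole (Atiyah--Bott--Grothendieck) coherent Lefschetz fixed point formula to the $q$-power Frobenius $F : X \to X$. Since $dF$ vanishes identically on $\Omega^1_X$, the differential of Frobenius is zero at every fixed point, so the local contribution at each $\mathbb{F}_q$-rational point reduces to $1/\det(1 - dF) \equiv 1 \pmod p$ and there are no other fixed points. This yields
\begin{equation*}
|X(\mathbb{F}_q)| \;\equiv\; \sum_{i \geq 0} (-1)^i \tr\!\bigl(F \,\big|\, H^i(X, \mathcal{O}_X)\bigr) \pmod p,
\end{equation*}
and the analogous identity for $Y$; hence Conjecture \ref{conjmodp} is reduced to proving that this alternating Frobenius trace agrees for any two derived equivalent varieties.

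For the second step, observe that any $\mathbb{F}_p$-linear exact equivalence $\Phi : D^b(X) \xrightarrow{\sim} D^b(Y)$ automatically intertwines the absolute Frobenius endofunctors, because these are intrinsic to the $\mathbb{F}_p$-linear structure on the category. Hochschild homology $HH_*$ is a derived invariant (Keller, Orlov), and therefore $HH_*(X)$ as a graded $\mathbb{F}_p$-vector space equipped with its Frobenius action is transported by $\Phi$. The task is then to identify the trace $\sum (-1)^i \tr(F \mid H^i(X, \mathcal{O}_X))$ with a trace computed on $HH_*(X)$, or on a closely related derived invariant such as negative cyclic or periodic topological cyclic homology; this is precisely where one would invoke refinements of the coherent Lefschetz formula in the spirit of \cite{2019arXiv190612267A} and of the $TP$-based methods cited in Section \ref{moreonzeta}.

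The substantive obstacle is that the full Hodge--Frobenius decomposition of $HH_*(X)$ is \emph{not} a derived invariant: if it were, Conjecture \ref{conjhodge} would be immediate without any appeal to Conjecture \ref{conjzeta}. One must therefore argue that precisely the summand $q = 0$ of the HKR decomposition, together with its Frobenius action and only in its alternating trace, is singled out by a categorical mechanism. Making this precise likely requires either a Lefschetz-type formula in mod-$p$ $TP$ à la Nikolaus--Scholze, or a direct argument on the de Rham--Witt complex exhibiting the slope-zero part $H^*(X, W\mathcal{O}_X)$ as controlled by a derived invariant of $D^b(X)$. Either route constitutes the real content of the conjecture; the Woods Hole reduction above is merely the easy preliminary step.
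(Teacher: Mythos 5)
This statement is an open conjecture, not a proven result; the paper explicitly notes that it ``has not been considered before'' and does not offer a proof. What the paper does provide is exactly the reduction you describe in your first paragraph: Fulton's trace formula identifies $|X(\Ff_q)| \bmod p$ with the alternating Frobenius trace on coherent cohomology, $\sum_{i}(-1)^i \tr(\frob \,|\, H^i(X,\Oo_X))$, so the conjecture is equivalent to the derived invariance of this alternating trace. Your Woods Hole reformulation is the same observation in different clothing. The paper then establishes only that the $i=1$ summand is a derived invariant, via the fact that the isogeny class of the Albanese variety is preserved under derived equivalence.

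The remainder of your proposal does not close the gap, and you acknowledge this yourself. Two specific problems. First, the claim that an $\Ff_p$-linear exact equivalence ``automatically intertwines the absolute Frobenius endofunctors, because these are intrinsic to the $\Ff_p$-linear structure on the category'' is unjustified; a Fourier--Mukai kernel carries no a priori compatibility with Frobenius pullback or pushforward, and the paper makes no such assertion precisely because, were it true in the form you need, it would immediately dispose of the conjecture. Second, even granting some form of Frobenius equivariance on $HH_*$ or on $TP$, you correctly observe that the HKR (Hodge) decomposition of Hochschild homology is not a derived invariant, so isolating the single weight-zero piece $H^*(X,\Oo_X)$ together with its Frobenius action inside a derived invariant is exactly the unresolved content of Conjecture~\ref{conjmodp}. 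The proposal therefore reduces the problem to itself rather than proving it.
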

Let $X$ be a scheme over a finite field $\Fq$. We denote by $\frob_X$, or simply by $\frob$, the Frobenius endomorphism of $X$, i.e. the unique endomorphism which is the identity on topological spaces and raises each regular function to its $q$th power. It is a finite morphism of schemes. Notice that if $\F$ is an $\Oo_X$ module, then $\F$ and $\frob_* (\F)$ are isomorphic as sheaves of abelian groups and therefore there is a natural isomorphism $H^i(X,\F)\cong H^i(X,\frob_* (\F))$. We implicitly use this in what follows.

Thanks to Fulton's Trace Formula (\cite[page 189]{MR512269}), if $X/\Ff_q$ is a nice variety, we have
\begin{displaymath}
|X(\Ff_q)| \mod p = \sum _{i=0}^{\dim X} (-1)^i \tr(\frob | H^i(X,\Oo_X)).
\end{displaymath}
Compared to the Lefschetz trace formula, which computes the cardinality of $X(\Ff_q)$ from the action of the Frobenius on the $\ell$-adic cohomology ring of $X$, it has the advantage that involves only coherent sheaves. Finally it is important to notice that we do not conjecture that the trace of the Frobenius on each $H^i$ is a derived invariant, but only their sum with alternating signs. Of course the case $i=1$ plays a special role. Indeed we have
\begin{prop}
The number $\tr(\frob | H^1(X,\Oo_X)) \in \Ff_p$ is a derived invariant. More precisely let $X,Y/\Fq$ be derived equivalent nice varieties, then $\tr(\frob_X | H^1(X,\Oo_X))= \tr(\frob_Y | H^1(Y,\Oo_Y))$.
\end{prop}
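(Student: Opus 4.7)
The plan is to combine Rouquier's theorem on the identity component of the autoequivalence group of $D^b(X)$ with the fact that the Hasse--Witt trace of an abelian variety over $\Ff_q$ is an isogeny invariant.

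First, any $\Ff_q$-linear derived equivalence $\Phi\colon D^b(X)\xrightarrow{\sim} D^b(Y)$ induces, by Rouquier's theorem, an isomorphism of algebraic groups over $\Ff_q$,
$$\Aut^0(X)\times\Pic^0(X) \;\cong\; \Aut^0(Y)\times\Pic^0(Y).$$
A positive-characteristic analogue of Popa--Schnell's Albanese isogeny theorem refines this to an isogeny of abelian varieties $\Pic^0(X)\sim\Pic^0(Y)$ over $\Ff_q$. The key additional input is that the maximal abelian quotient of $\Aut^0(X)$ embeds, up to isogeny, into $\mathrm{Alb}_X\sim\Pic^0(X)$ via the action of $\Aut^0(X)$ on $X$ composed with the Albanese morphism, so that the ``extra'' abelian factor on each side of Rouquier's product isomorphism is already accounted for, up to isogeny, inside $\Pic^0$.

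Second, since the Albanese morphism $\alpha_X\colon X\to\mathrm{Alb}_X$ is $\Ff_q$-defined, pullback along it yields a Frobenius-equivariant isomorphism $H^1(\mathrm{Alb}_X,\Oo)\cong H^1(X,\Oo_X)$. Combined with the isogeny $\mathrm{Alb}_X\sim\Pic^0(X)$ coming from a polarization and the previous step, one reduces to the claim that the Hasse--Witt trace $\tr(\frob | H^1(A,\Oo_A))\in\Ff_p$ of an abelian variety $A/\Ff_q$ depends only on the isogeny class of $A$.

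The verification of this last claim goes through the theory of $F$-isocrystals: the characteristic polynomial of Frobenius on $H^1_{\operatorname{crys}}(A/W(\Ff_q))$---equivalently, on the $\ell$-adic cohomology $H^1(\overline{A},\Ql)$ for $\ell\neq p$---is an isogeny invariant of $A$ by Tate's theorem, while the Hasse--Witt trace equals, modulo $p$, the sum of the unit-root Frobenius eigenvalues on $H^1_{\operatorname{crys}}$, and is therefore itself an isogeny invariant. Putting everything together yields the desired equality $\tr(\frob_X | H^1(X,\Oo_X))=\tr(\frob_Y | H^1(Y,\Oo_Y))$. The main obstacle is the first step: Rouquier's theorem produces only a product isomorphism of $\Aut^0\times\Pic^0$, so deducing an isogeny of the $\Pic^0$ factors alone requires the additional observation about the abelian part of $\Aut^0(X)$ sketched above (in characteristic zero this separation is the content of Popa--Schnell's theorem, proved via Hodge theory).
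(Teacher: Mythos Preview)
Your argument follows the same three-step skeleton as the paper's proof: (i) pass from $H^1(X,\Oo_X)$ to $H^1(\mathrm{Alb}_X,\Oo)$ via the Albanese morphism, (ii) use that the isogeny class of $\mathrm{Alb}_X$ (equivalently $\Pic^0(X)$) is a derived invariant, and (iii) observe that the Hasse--Witt trace of an abelian variety over $\Fq$ depends only on its isogeny class.

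Where you differ is in how you justify (ii) and (iii). For (ii), the paper simply cites the positive-characteristic result of Honigs et al.\ \cite[Theorem~B]{MR3750214}, whereas you attempt to rederive it from Rouquier's product isomorphism plus a Popa--Schnell-style separation of the $\Aut^0$ and $\Pic^0$ factors. Your sketch is honest about the obstacle here, and indeed carrying it out in characteristic $p$ (where $\Aut^0$ need not be reduced and Hodge-theoretic arguments are unavailable) is exactly the content of the cited theorem; so you are reproving a black box rather than bypassing it. For (iii), the paper argues more elementarily: an $\Fq$-isogeny preserves point counts over every $\Ff_{q^n}$ (a short Galois-cohomology exercise), hence preserves the full characteristic polynomial of Frobenius, from which the mod-$p$ trace on $H^1(-,\Oo)$ can be read off. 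Your route through unit-root $F$-isocrystals reaches the same conclusion and makes the last extraction step more transparent, at the cost of heavier machinery.

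In short: your proof is correct and structurally identical to the paper's; the paper is terser because it cites (ii) outright and handles (iii) by point counting rather than crystalline cohomology.
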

\begin{proof}
Let $A$ be the Albanese variety of $X$, we have that
\begin{displaymath}
 H^1(X,\Oo_X) \cong H^1(A,\Oo_A) 
\end{displaymath}
and the isomorphism is compatible with the action of the two Frobenii. A simple argument, involving only Galois cohomology, shows that the number of points of an abelian variety depends only on its isogeny class. Since the isogeny class of the Albanese variety is a derived invariant, as established in \cite[Theorem B]{MR3750214}, the result follows.
\end{proof}

\subsection{Known results}
We end the introduction summarising what is known towards the three conjectures presented so far.
Let $X,Y/K$ be derived equivalent nice varieties. They have isomorphic motives when
\begin{itemize}
\item $X$ is a curve of genus different from one, \cite[Corollary 4.13]{book};
\item $X$ has ample or anti-ample canonical bundle, \cite{MR1818984};
\item $X$ is an abelian variety, \cite[Theorem 2.19]{orlovab}, see also \cite[Lemma 2.1]{honigs};
%\item when the equivalence of categories is associated to a Fourier-Mukai kernel whose support has dimension $\dim X$ (ADD REF??);
\item $X$ has dimension two, \cite[Theorem 0.1 and Section 2.4]{MR3785793} and \cite[Theorem B]{MR3750214}.
\end{itemize}
Actually in the first two cases, we can conclude that $X$ and $Y$ are isomorphic.

Excluding the cases appearing above we have:
\begin{itemize}
\item Conjecture \ref{conjzeta} has been established for nice varieties of dimension three (\cite[Theorem A]{MR3750214});
\item Conjecture \ref{conjhodge} has been established for nice threefolds (\cite[Corollary 3]{MR2839458}) and if $X$ and $Y$ have dimension $4$ and the same $h^{1,1}$ (\cite[Corollary 1.4]{MR3737326}).
\end{itemize}
To the best of our knowledge, Conjecture \ref{conjmodp} has not been considered before.

\subsection{Acknowledgements}
We thank M.Tamiozzo for stimulating discussions regarding Conjecture \ref{conjzeta} and E.Ambrosi for the interest in this work and numerous discussions about motives. This work was supported by the Engineering and Physical Sciences Research Council [EP/L015234/1], the EPSRC Centre for Doctoral Training in Geometry and Number Theory (The London School of Geometry and Number Theory), University College London.

\section{Proof of the main results}\label{proof}
We first recall a result about derived equivalences and then prove Theorem \ref{main}. Using some classical results about $\ell$-adic cohomology, we prove the two propositions of section \ref{moreonzeta}. We conclude the section reproving Conjecture \ref{conjmotives} for abelian varieties, by looking at the associated Galois representations
\subsection{Orlov's theorem}
Let $K$ be any field, and $X, Y/K$ be nice varieties. Consider $X\times_K Y$ and let $p_X$ (resp. $p_Y$) be the projection onto $X$ (resp. onto $Y$). For background on derived categories we refer to the monograph \cite{book}. Every object $E\in D^b(X\times_K Y)$ defines a \emph{Fourier--Mukai functor} $\Psi_E$ via the assignment:
\begin{displaymath}
\Psi_E : D^b(X)\to D^b(Y), \ \ \ F \mapsto p_{Y *} ({p_X}^* F \otimes E ).
\end{displaymath}
Orlov \cite[Theorem 3.2.1]{MR1998775} proved the following
\begin{thm}[Orlov]\label{orlovthm}
Let $X,Y$ be nice varieties defined over any field $K$ and let $F : D^b(X)\to D^b(Y )$ be an equivalence of categories. There exists an object $E \in D^b(X \times_K Y )$ such that $F$ is isomorphic to the Fourier-Mukai transform $\Psi_E$ and this object is unique up to isomorphism.
\end{thm}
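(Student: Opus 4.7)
The plan is to split the statement into the uniqueness of the kernel (which is essentially a support/base-change exercise) and its existence (which is the substantive content), and I would tackle uniqueness first because the arguments involved motivate what is needed for existence. For uniqueness, suppose $\Psi_E \simeq \Psi_{E'}$ as functors. Evaluating both sides on the structure sheaf $k(x)$ of any closed point $x \in X$ and using the projection formula gives $E|_{\{x\} \times Y} \simeq E'|_{\{x\} \times Y}$ in $D^b(Y)$ for every $x$. A standard support argument (together with the $K$-linearity, which handles non-closed residue fields by Galois descent) then upgrades this fibrewise agreement to a global isomorphism $E \simeq E'$ in $D^b(X \times_K Y)$.

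For existence, my starting point is the observation that any $K$-linear exact equivalence $F$ commutes, up to isomorphism, with Serre functors, because $S_X(-) = - \otimes \omega_X[\dim X]$ is characterised intrinsically in terms of the triangulated structure together with Hom-finiteness. I would exploit this to classify the images of distinguished objects. Skyscraper sheaves $k(x)$ are characterised up to shift as those indecomposables $P$ satisfying $S_X(P) \simeq P[\dim X]$, $\Hom^{<0}(P,P) = 0$, and $\End(P)$ is a field, so $F$ must send $k(x)$ to some $k(\phi(x))[n(x)]$ on $Y$. A further analysis would show that $\phi$ extends to an isomorphism of schemes, that $n$ is locally constant, and in particular $\dim X = \dim Y$. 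A parallel argument for line bundles, characterised by having $\Ext^*(L,k(x))$ concentrated in a single degree for all $x$ together with $\End(L) = K$, shows that $F$ maps line bundles to shifted line bundles.

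The central step is then to manufacture the kernel $E$ itself. Following Orlov, I would fix an ample sequence of line bundles $\{L_n\}$ on $X$ and use the associated resolution of the diagonal $\Delta_X \subset X \times_K X$ by exterior products of the form $L_i^{\vee} \boxtimes L_i$. Applying $F$ in the second factor yields a complex on $X \times_K Y$ whose total object $E$ is the candidate kernel. A natural transformation $\Psi_E \to F$ is then built from the universal data attached to this resolution, and I would verify it is an isomorphism by checking agreement on the spanning class of skyscraper sheaves, which, since these generate $D^b(X)$, forces the transformation to be an isomorphism everywhere.

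The hard part will be making this fibrewise or spanning-class construction rigorous, that is, showing that the pointwise data actually glue into an honest object $E \in D^b(X \times_K Y)$ rather than merely a compatible family of objects on fibres. A cleaner modern route is to invoke the uniqueness of DG enhancements of $D^b(X)$ of Lunts--Orlov together with Toën's theorem identifying exact functors between derived categories of DG algebras with bimodules, which in the geometric case are precisely objects of $D^b(X \times_K Y)$; this sidesteps the combinatorial gluing issue entirely. Either way, the essential obstacle is establishing representability of an abstractly given equivalence, which is what makes the theorem non-trivial and explains why it underpins all the subsequent discussion of derived invariants in the paper.
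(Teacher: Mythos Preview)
The paper does not prove this theorem: it is quoted as Orlov's result with a citation to \cite[Theorem 3.2.1]{MR1998775} and used as a black box, so there is no ``paper's own proof'' to compare against. Your write-up is therefore not a reconstruction of anything in the paper but an attempted sketch of Orlov's original argument.

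That said, your sketch contains a genuine error worth flagging. The claim that an arbitrary equivalence $F$ must send a skyscraper $k(x)$ to a shifted skyscraper $k(\phi(x))[n(x)]$ is false in general: the Serre-functor criterion $S_X(P)\simeq P[\dim X]$ singles out skyscrapers only when $\omega_X$ is ample or anti-ample (this is the Bondal--Orlov reconstruction setting), whereas on a Calabi--Yau variety every object satisfies that identity. The standard counterexample is the Fourier--Mukai transform on an abelian variety with kernel the Poincar\'e bundle, which sends skyscrapers to line bundles. Consequently the paragraph in which you build an isomorphism $\phi\colon X\to Y$ out of the images of points, and the parallel line-bundle analysis, does not go through for the general statement and cannot be the backbone of the proof.

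Orlov's actual argument does use an ample sequence and a resolution of the diagonal, but it proceeds by first building a natural transformation $\Psi_E\to F$ via convolution and then checking it is an isomorphism on the ample sequence itself (which is a spanning class), not on skyscrapers. Your final paragraph, invoking uniqueness of DG enhancements (Lunts--Orlov) together with To\"en's derived Morita theory, is a legitimate alternative route and in fact the cleanest modern proof; if you want a correct outline, that is the one to keep.
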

In what follows it is important that the theorem has no restriction on the field $K$.
\subsection{Proof of Theorem \ref{main}}
We first reduce Conjecture \ref{conjhodge} to varieties defined over a number field, thanks to a spreading out argument, and then compute the Hodge diamond from the reduction modulo primes (using Theorem \ref{ito}). 
\begin{prop}\label{propqbar}
It is enough to prove Conjecture \ref{conjhodge} for nice varieties defined over number fields.
\end{prop}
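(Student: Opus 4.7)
The plan is a standard spreading-out argument. Begin with two derived equivalent nice varieties $X, Y / \CC$ and invoke Orlov's theorem (Theorem \ref{orlovthm}) to realise the equivalence as a Fourier--Mukai transform $\Psi_E$ for some kernel $E \in D^b(X \times_\CC Y)$. The goal is then to produce an arithmetic model of the triple $(X, Y, E)$ whose special fibres give a derived equivalence between nice varieties over a number field.

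To construct such a model, I would choose a finitely generated $\QQ$-subalgebra $A \subset \CC$ together with smooth projective morphisms $\mathcal{X}, \mathcal{Y} \to \Spec A$ and an object $\mathcal{E} \in D^b(\mathcal{X} \times_A \mathcal{Y})$ which recover $X, Y, E$ after base change along $A \hookrightarrow \CC$. After also spreading out a quasi-inverse kernel $E'$ and shrinking $\Spec A$ (inverting finitely many elements of $A$ so that the two composed relative functors become isomorphic to the respective identities), the relative Fourier--Mukai functor $\Psi_{\mathcal{E}}$ is an equivalence which restricts to an equivalence $D^b(\mathcal{X}_s) \simeq D^b(\mathcal{Y}_s)$ on every fibre. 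Now pick any closed point $s \in \Spec A$: since $A$ is a finitely generated $\QQ$-algebra, Zariski's lemma forces $k(s)$ to be a finite extension of $\QQ$, i.e., a number field. The assumed case of Conjecture \ref{conjhodge} applied to $\mathcal{X}_s, \mathcal{Y}_s$ over $k(s)$ then yields $h^{p,q}(\mathcal{X}_s) = h^{p,q}(\mathcal{Y}_s)$ for all $p, q$.

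To close the argument I would transport this equality across the family. In characteristic zero the Hodge-to-de Rham spectral sequence of a smooth projective morphism degenerates, so each $R^q \pi_* \Omega^p_{\mathcal{X}/\Spec A}$ is locally free on $\Spec A$, and base change identifies its fibres with the Hodge cohomology of the geometric fibres. After restricting to a connected open of $\Spec A$, the Hodge numbers of $\mathcal{X}_s$ therefore coincide with those of the generic geometric fibre $X$, and similarly for $Y$; combining this with the previous paragraph yields $h^{p,q}(X) = h^{p,q}(Y)$. The main technical point is the spreading-out of the equivalence in the second paragraph: one has to verify that after suitably shrinking the base both the kernel and its quasi-inverse remain Fourier--Mukai kernels inducing mutually inverse equivalences on every fibre. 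This is by now well understood and can be deduced by combining generic flatness with cohomology and base change for perfect complexes.
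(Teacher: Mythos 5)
Your argument is correct and is essentially the same spreading-out argument as in the paper: both realise the equivalence as a Fourier--Mukai transform, spread out the kernel and its quasi-inverse over a finitely generated base, shrink the base so that the fibrewise functors remain mutually inverse equivalences, specialise to a point with number-field residue field, and then use the constancy of Hodge numbers in smooth proper families over characteristic zero (which the paper cites from Deligne and you re-derive from Hodge-to-de Rham degeneration plus base change). The only cosmetic differences are that you work with closed points of $\Spec A$ for $A$ a finitely generated $\QQ$-subalgebra of $\CC$, while the paper passes to a model $S$ over the number field $K = L \cap \overline{\QQ}$ and uses $\overline{\QQ}$-points of an open $U \subset S$.
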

\begin{proof}
Let $X,Y/\C$ be derived equivalent nice varieties, $F:D^b(X)\to D^b(Y)$ an equivalence and $E\in D^b(X\times_\C Y)$ be the unique Fourier-Mukai kernel associated to $F$ (using Theorem \ref{orlovthm}). We claim that:
\begin{itemize}
\item There is variety $S$, defined over a number field $K$, morphisms $X'\to S, Y'\to S$, defined over $K$, such that $X'_0 =X, Y'_0=Y$ for a canonical point $0 \in S(\C)$ (where $X_0$, resp. $Y_0$, denotes the fibre of $X'\to S$, resp. of $Y'\to S$ over $0\in S(\C)$). Moreover there exists an open sub-scheme $U \subset S /K$ such that for all $s\in U(\Qbar)$ the fibres $X_s, Y_s$ are nice varieties and are derived equivalent ($\Qbar$-linearly). 
\end{itemize}
To see this notice that, since their defining equations are given by a finite set of polynomials, $X,Y$ are defined over a sub-field $L$ of $\C$ which is finitely generated over $\Q$. Let $K$ be the number field obtained intersecting $L$ with $\Qbar \subset \C $, and $S/K$ be a model of $L$. To spread out $E$, and the the Fourier-Mukai vector associated to the inverse of $F$, we argue as follows. Assume for simplicity that $E$ is defined by a coherent sheaf $G$ on $X\times_\C Y$ (rather than a bounded complex), and write $G$ as the cokernel of a morphism between free sheaves (of finite rank). Since free sheaves clearly spread out and the morphisms between them involve only finitely many polynomial equations, we see that the locus of points in $S$ whose fibers can fail to be derived equivalent is a closed sub-scheme. Finally, by shrinking $S$, we may also assume $X'$ and $Y'$ are smooth projective $S$-schemes.

Recall that Deligne \cite[Theorem (5.5).]{del} proved that, for a proper smooth family of varieties in characteristic $0$, the Hodge numbers are constants among the fibers. Therefore it is enough to prove that $X'_s$ and $Y'_s$ have the same Hodge numbers for some $s$ in $U(\Qbar)$. This concludes the proposition.
\end{proof}

The last ingredient we will need to prove the theorem is due to Ito, \cite[Proposition 1.2]{ito}. The proof relies on Chebotarev density theorem, $p$-adic Hodge theory \cite{MR924705} and the Weil conjectures \cite{weil1}.
\begin{thm}[Ito]\label{ito}
Let $K$ be a number field and $\Oo_K$ its ring of integers. Let $\mathcal{X}$ and $\mathcal{Y}$ be schemes of finite type over $\Oo_K$ whose generic fibers $X$ and $Y$ are proper and smooth over $K$. If, for all but finitely many maximal ideals $\mathfrak{p}\subset \Oo_K$, we have
\begin{displaymath}
 |\mathcal{X}(\Oo_K/\mathfrak{p})| = |\mathcal{Y}(\Oo_K/\mathfrak{p})|  
\end{displaymath}
then $X$ and $Y$ have equal Hodge numbers.
\end{thm}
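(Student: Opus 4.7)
The plan is to bootstrap the level-one point-count hypothesis into an isomorphism of semisimplified $\ell$-adic Galois representations $H^i_{\et}(\overline{X}, \Ql) \cong H^i_{\et}(\overline{Y}, \Ql)$ for every $i$, and then read the Hodge numbers off a $p$-adic incarnation of that isomorphism via $p$-adic Hodge theory. The three ingredients listed in the remark -- Chebotarev density (used twice), the Weil conjectures, and Faltings' Hodge--Tate comparison -- play distinct roles; Chebotarev is what closes the gap between the single trace equation supplied by the hypothesis and the full character of a Galois representation.

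Fix a prime $\ell$ and form the virtual representation $\tau_X := \sum_i (-1)^i [H^i_{\et}(\overline{X}, \Ql)]$ in the Grothendieck group of $\Gal(\overline{K}/K)$-modules, and similarly $\tau_Y$. The Lefschetz trace formula rewrites the hypothesis as $\tr(\frob_\mathfrak{p} \mid \tau_X) = \tr(\frob_\mathfrak{p} \mid \tau_Y)$ for almost all $\mathfrak{p}$. Chebotarev density together with continuity of the trace extend this to $\tr(g \mid \tau_X) = \tr(g \mid \tau_Y)$ for every $g \in \Gal(\overline{K}/K)$; specialising to $g = \frob_\mathfrak{p}^n$ upgrades the hypothesis to $|\mathcal{X}(\FF_{q_{\mathfrak{p}}^{n}})| = |\mathcal{Y}(\FF_{q_{\mathfrak{p}}^{n}})|$ for all $n \geq 1$ and all good $\mathfrak{p}$, hence to equality of the local zeta functions $Z(\mathcal{X}_\mathfrak{p}, t) = Z(\mathcal{Y}_\mathfrak{p}, t)$. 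Deligne's Weil II now enters: the factors $P_i(t)$ of each local zeta function have roots of complex absolute value $q_{\mathfrak{p}}^{i/2}$, so the moduli coming from distinct $i$ are distinct, the factorisation $Z = \prod_i P_i^{(-1)^{i+1}}$ is unique, and we conclude $P_i(\mathcal{X}_\mathfrak{p}, t) = P_i(\mathcal{Y}_\mathfrak{p}, t)$ for every $i$ and every good $\mathfrak{p}$. A second application of Chebotarev, combined with Brauer--Nesbitt, then promotes the equality of characteristic polynomials to an isomorphism
\[
H^i_{\et}(\overline{X}, \Ql)^{\mathrm{ss}} \;\cong\; H^i_{\et}(\overline{Y}, \Ql)^{\mathrm{ss}}
\]
of semisimple $\Gal(\overline{K}/K)$-representations, separately for each $i$.

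To convert this $\ell$-adic output into classical Hodge data I would take $\ell = p$ for a rational prime $p$ and choose a place $\mathfrak{p}$ of $K$ above $p$ at which both $X$ and $Y$ have good reduction; such places exist because the bad locus is finite while infinitely many primes of $K$ lie above $p$. Faltings' Hodge--Tate comparison theorem then supplies a canonical $\Gal(\overline{K}_\mathfrak{p}/K_\mathfrak{p})$-equivariant decomposition
\[
H^n_{\et}(\overline{X}, \mathbb{Q}_p) \otimes_{\mathbb{Q}_p} \mathbb{C}_p \;\cong\; \bigoplus_{i+j=n} H^i(X, \Omega_X^j) \otimes_K \mathbb{C}_p(-j),
\]
so $h^{i,\,n-i}(X)$ is precisely the multiplicity of the Hodge--Tate weight $n-i$ inside $H^n_{\et}(\overline{X}, \mathbb{Q}_p)$, and similarly for $Y$. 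Because Hodge--Tate weights with multiplicities are invariants of the semisimplification of the $p$-adic representation, the isomorphism of the previous paragraph forces $h^{i,j}(X) = h^{i,j}(Y)$ for all $i,j$. The main conceptual obstacle is really this final step: without Faltings' comparison there is no direct route from an $\ell$-adic Galois representation back to the complex Hodge diamond, so this theorem is the essential nontrivial input, whereas the Chebotarev--Weil~II--Brauer--Nesbitt chain is by now standard once Deligne's purity is granted.
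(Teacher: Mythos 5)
The paper does not actually prove Theorem \ref{ito}; it cites it as \cite[Proposition 1.2]{ito} and merely records that Ito's proof uses Chebotarev density, $p$-adic Hodge theory \cite{MR924705}, and the Weil conjectures \cite{weil1}. Your reconstruction deploys exactly those three ingredients, in the order Ito does: Lefschetz trace formula, a first Chebotarev argument to promote the single trace equation to equality of local zeta functions, Weil II purity to split off the individual $P_i$ and hence the individual $H^i$, a second Chebotarev plus Brauer--Nesbitt to get an isomorphism of semisimplified $\ell$-adic representations for each $i$, and finally Faltings' Hodge--Tate decomposition at a good place above $\ell=p$ to extract the Hodge numbers as multiplicities of Hodge--Tate weights. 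This is the standard proof and it is correct in substance.

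One factual slip in the last paragraph: you assert that a good-reduction place $\mathfrak{p}\mid p$ exists ``because the bad locus is finite while infinitely many primes of $K$ lie above $p$.'' Above a fixed rational prime $p$ there are only finitely many primes of $K$ (at most $[K:\QQ]$), so this reason is wrong as stated. The correct justification is the complementary one: the bad locus is a finite set $S$ of primes of $K$, so for all but finitely many rational primes $p$ \emph{every} prime of $K$ above $p$ is of good reduction. Since your Chebotarev--Weil--Brauer--Nesbitt chain yields $H^i(\overline{X},\Q_\ell)^{\mathrm{ss}}\cong H^i(\overline{Y},\Q_\ell)^{\mathrm{ss}}$ for \emph{every} prime $\ell$, you are free to pick any such $p$, set $\ell=p$, and choose any $\mathfrak{p}\mid p$. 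With that fix the argument is complete.
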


We are now ready to prove the main theorem.
\begin{proof}[Proof of Theorem \ref{main}]
Thanks to Proposition \ref{propqbar}, we may assume that $X$ and $Y$ are defined over a number field $K$. Denote by $\Oo_K$ the ring of integers of $K$. We may find schemes of finite type $\mathcal{X},\mathcal{Y}$ defined over $\Oo_K$ with generic fibre $X$ and $Y$ that are smooth outside a finite set of places. We claim that for all but finitely many maximal ideals $\mathfrak{p}\subset \Oo_K$ the reductions mod $\mathfrak{p}$ of $X$ and $Y$ are again derived equivalent (as $\Oo_K/ \mathfrak{p}$-varieties). The argument to prove this is similar to the one presented in Proposition \ref{propqbar} to spread out complex of coherent sheaves. Indeed the definitions of the objects $E\in D^b(X\times_K Y)$ and $E'\in D^b(Y\times_K X)$ corresponding to a derived equivalence between $X$ and $Y$ and its inverse involve only finitely many denominators and so determine, for all but finitely many primes $\mathfrak{p}$, an object $E_\mathfrak{p} \in D^b((\mathcal{X}\times_{\Oo_K}\mathcal{Y})\times_{\Oo_K} \Oo_K / \mathfrak{p})$ which realises the desired equivalence (with inverse $E'_{\mathfrak{p}}$).

Thanks to Conjecture \ref{conjzeta}, we have that the reductions mod $\mathfrak{p}$ of $X$ and $Y$ have the same number of points (for all but finitely many primes $\mathfrak{p}\subset \Oo_K$). Eventually we may apply Theorem \ref{ito} to conclude that $X$ and $Y$ have equal Hodge numbers.
\end{proof}

\subsection{Proof of Proposition \ref{prop1}}
The following argument is certainly well known to the experts. Since we were not able to locate it in the literature we offer a sketch of the following, which implies Proposition \ref{prop1}. 

\begin{lemma}\label{lemma1}
Let $X,Y/\Fq$ be nice varieties defined. Assume that, for some prime $\ell \neq p$, the Tate and semisimplicity conjectures for the $\ell$-adic cohomology of $X\times _{\Fq} Y$ are known (in all possible codimensions). If $Z(X,t)=Z(Y,t)$, then $X$ and $Y$ have isomorphic homological $K$-motives.
\end{lemma}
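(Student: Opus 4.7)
The plan is to extract from $Z(X,t)=Z(Y,t)$ an isomorphism of $\ell$-adic Galois representations in every degree, and then to realise it by an algebraic correspondence via the Tate conjecture. Set $d=\dim X = \dim Y$ (an equality forced by comparing top-weight factors of the zeta functions). A morphism $h(X)\to h(Y)$ in the category of homological motives is a degree-$d$ correspondence modulo homological equivalence, hence by construction is detected by its $\ell$-adic cohomology class; so it suffices to produce cycles $f\in\mathcal{A}^d(X\times Y)$ and $g\in\mathcal{A}^d(Y\times X)$ whose cohomological realisations are mutually inverse isomorphisms of the graded Galois modules $\bigoplus_i H^i(\overline{X},\Ql)$ and $\bigoplus_i H^i(\overline{Y},\Ql)$.

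The construction of the Galois isomorphism is standard. By the Weil conjectures each factor $P_i$ of the zeta function has Frobenius eigenvalues of absolute value $q^{i/2}$, so the individual polynomials $P_i$ can be read off from $Z$; hence Frobenius acts with the same characteristic polynomial on $V^i_X:=H^i(\overline{X},\Ql)$ and $V^i_Y:=H^i(\overline{Y},\Ql)$. Since $\Gal(\overline{\Fq}/\Fq)$ is topologically generated by Frobenius, the semisimplicity hypothesis upgrades this equality of characteristic polynomials to an abstract Galois-equivariant isomorphism $V^i_X\cong V^i_Y$ for every $i$. Combining Künneth with Poincaré duality, and noting that cross-degree tensor components have non-trivial Frobenius weight and contribute nothing to the invariants, one identifies
\begin{displaymath}
H^{2d}\bigl(\overline{X\times Y},\Ql(d)\bigr)^{\Gal(\overline{\Fq}/\Fq)} \;\cong\; \bigoplus_{i=0}^{2d} \Hom_{\Gal(\overline{\Fq}/\Fq)}\bigl(V^i_X,V^i_Y\bigr),
\end{displaymath}
under which the cohomology class of a correspondence $\alpha$ corresponds to the tuple of induced maps $(\alpha_*:V^i_X\to V^i_Y)_i$.

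To finish, I would invoke the Tate conjecture for $X\times Y$ in codimension $d$: the cycle class map $\mathcal{A}^d(X\times Y)\otimes\Ql$ surjects onto the space of tuples above. The locus of tuples with every component invertible is a Zariski-open subset of this $\Ql$-vector space, and it is non-empty by the second paragraph. The elementary observation that a $\Q$-subspace of a finite-dimensional $\Ql$-vector space whose $\Ql$-span is the whole space must meet every non-empty Zariski-open subset (because $\Q$ is Zariski-dense in $\Ql$) then furnishes a $\Q$-rational $f\in\mathcal{A}^d(X\times Y)$ inducing an isomorphism in every degree. Applying the same density argument to the Tate conjecture on $Y\times X$ produces $g$ realising the inverse tuple $(f_*^{-1})_i$; the compositions $g\circ f$ and $f\circ g$ then have cohomology classes equal to the identities, and so equal them as morphisms of homological motives. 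The only delicate point is this passage from $\Ql$- to $\Q$-coefficients inside the Tate conjecture; every other step is a direct combination of Weil, semisimplicity, Künneth and Poincaré duality.
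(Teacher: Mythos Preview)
Your strategy coincides with the paper's: use the Weil conjectures to separate the zeta function into its weight pieces, invoke semisimplicity to upgrade equality of characteristic polynomials to a Galois-equivariant isomorphism $V^i_X\cong V^i_Y$, and then appeal to the Tate conjecture on $X\times Y$ to realise this by algebraic correspondences. You are in fact more explicit than the paper about the passage from $\Ql$- to $\Q$-coefficients, which the paper sweeps into the slogan ``Tate $\Leftrightarrow$ the $\ell$-adic realisation is fully faithful''.

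There is, however, a genuine slip in your last step. Your Zariski-density argument correctly produces a $\Q$-rational $f\in\mathcal{A}^d(X\times Y)$ whose realisation is invertible in every degree: the locus of invertible tuples is Zariski-open and non-empty, and a $\Q$-subspace spanning a $\Ql$-space meets every such open. But the same argument applied on $Y\times X$ only yields \emph{some} $\Q$-rational $g$ with invertible realisation; it cannot hit the single specified point $(f_*^{-1})_i$, so your claim that ``$g\circ f$ and $f\circ g$ then have cohomology classes equal to the identities'' does not follow. What you actually obtain is that $g\circ f\in\End_{\mathrm{hom}}(h(X))$ and $f\circ g\in\End_{\mathrm{hom}}(h(Y))$ have invertible $\ell$-adic realisations.

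This is easily repaired. The algebra $A:=\End_{\mathrm{hom}}(h(X))$ is a finite-dimensional $\Q$-algebra (it injects into a finite-dimensional $\Ql$-space via the cycle class map), hence Artinian. An element of $A$ whose image in $\End_{\Ql}\bigl(\bigoplus_i V^i_X\bigr)$ is invertible is in particular not a zero-divisor in $A$, and in an Artinian ring non-zero-divisors are units. Thus $g\circ f$ is invertible in $A$, and likewise $f\circ g$ is invertible in $\End_{\mathrm{hom}}(h(Y))$; so $f$ has both a left inverse $(g\circ f)^{-1}g$ and a right inverse $g(f\circ g)^{-1}$, hence is an isomorphism of homological motives. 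With this adjustment your argument is complete and essentially the paper's, only with the $\Q$--$\Ql$ descent made explicit.
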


\begin{proof}
If $Z(X,t)=Z(Y,t)$, then, by Lefschetz fixed point theorem (see for example \cite[Theorem 3.1]{MR463174}), the set of eigenvalues of the geometric Frobenius (on $X$) acting on 
\begin{displaymath}
\bigoplus_{i=0}^{\dim X} (-1)^i H^i(\overline{X},\Q_\ell)
\end{displaymath}
is equal to the set of eigenvalues of the geometric Frobenius (on $Y$) acting on 
\begin{displaymath}
\bigoplus_{i=0}^{\dim Y} (-1)^i H^i(\overline{Y},\Q_\ell).
\end{displaymath}
Invoking the Weil conjectures (Deligne's theorem \cite{weil1}) we now see that $\dim X = \dim Y$ and that eigenvalues of the Frobenii acting on $H^i(\overline{X},\Q_\ell)$ and $H^i(\overline{Y},\Q_\ell)$ are the same, for every $i=0,\dots, \dim X=\dim Y$. Since the Galois representations we are considering are assumed to be semisimple, the Brauer--Nesbitt theorem shows that there exists an isomorphism 
\begin{displaymath}
H^i(\overline{X},\Q_\ell) \cong H^i(\overline{Y},\Q_\ell)
\end{displaymath}
commuting with the action of the absolute Galois group of $\Fq$. The Tate conjecture now implies that $X$ and $Y$ have isomorphic motives (with $\Q$-coefficients, not only $\Q_\ell$-coefficients). Indeed it is not hard to see that the $\ell$-adic Tate conjecture can equivalently be formulated as saying that the $\ell$-adic realisation functor from the category of (homological) motives to the category of Galois representation is fully faithful. 
\end{proof}
\subsection{Proof of Proposition \ref{prop2}}
Arguing as in the proof of Theorem \ref{main}, to prove Proposition \ref{prop2} it is enough to prove the following. Such statement can be thought as a stronger, but conjectural, version of Theorem \ref{ito}. Here we really use that $K$ is a number field, indeed the same statement is not true if $K$ is a finite field.
\begin{lemma}
Let $K$ be a number field and $\Oo_K$ its ring of integers. Let $\mathcal{X}$ and $\mathcal{Y}$ be schemes of finite type over $\Oo_K$ whose generic fibers $X$ and $Y$ are proper and smooth over $K$. If for all but finitely many primes $\mathfrak{p}\subset \Oo_K$ we have
\begin{displaymath}
 |\mathcal{X}(\Oo_K/\mathfrak{p})| = |\mathcal{Y}(\Oo_K/\mathfrak{p})|  ,
\end{displaymath}
then for every prime $\ell$, up to semisimplification, there exists an isomorphism
\begin{displaymath}
H^*(\overline{X},\Q_\ell)\cong H^*(\overline{Y},\Q_\ell)
\end{displaymath}
compatible with the action of the absolute Galois group of $K$. Therefore the Tate and semisimplicity conjectures for $X\times_K Y$ predict that $X$ and $Y$ have $K$-homological motives.
\end{lemma}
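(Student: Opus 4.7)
The plan is to combine the Lefschetz trace formula with Deligne's purity theorem to separate the equality of point counts into an equality of traces weight by weight, then use Chebotarev plus Brauer--Nesbitt to upgrade equal traces to isomorphic semisimplifications.

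First I would fix a prime $\ell$ and, for each prime $\mathfrak{p}\subset\Oo_K$ of good reduction coprime to $\ell$, apply smooth and proper base change to identify $H^i(\overline{\mathcal{X}_\mathfrak{p}},\Q_\ell)$ with $H^i(\overline{X},\Q_\ell)$ as $\frob_\mathfrak{p}$-modules (and likewise for $\mathcal{Y}$). The Grothendieck--Lefschetz trace formula then converts the hypothesis into the equality
\begin{displaymath}
\sum_{i=0}^{2\dim X}(-1)^i \tr\!\left(\frob_\mathfrak{p}\mid H^i(\overline{X},\Q_\ell)\right) = \sum_{i=0}^{2\dim Y}(-1)^i \tr\!\left(\frob_\mathfrak{p}\mid H^i(\overline{Y},\Q_\ell)\right)
\end{displaymath}
for all but finitely many $\mathfrak{p}$.

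Next I would separate these alternating sums by weight. By Deligne's theorem \cite{weil1}, the eigenvalues of $\frob_\mathfrak{p}$ on $H^i(\overline{X},\Q_\ell)$ are Weil numbers of absolute value $N(\mathfrak{p})^{i/2}$ under every complex embedding, and similarly for $Y$. Since the weights on different $H^i$ are distinct, a standard argument using infinitely many $\mathfrak{p}$ and varying $N(\mathfrak{p})$ (or equivalently taking powers of $\frob_\mathfrak{p}$ and matching archimedean absolute values) forces the equality
\begin{displaymath}
\tr\!\left(\frob_\mathfrak{p}\mid H^i(\overline{X},\Q_\ell)\right) = \tr\!\left(\frob_\mathfrak{p}\mid H^i(\overline{Y},\Q_\ell)\right)
\end{displaymath}
for each individual $i$ and all but finitely many $\mathfrak{p}$. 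In particular $\dim X=\dim Y$.

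At this point I would invoke the Chebotarev density theorem: the set of Frobenius conjugacy classes $\{\frob_\mathfrak{p}\}$ at unramified primes is dense in $\Gal(\overline{K}/K)$, so the continuous characters of the two $\ell$-adic Galois representations $H^i(\overline{X},\Q_\ell)$ and $H^i(\overline{Y},\Q_\ell)$ coincide on all of $\Gal(\overline{K}/K)$. The Brauer--Nesbitt theorem for continuous $\ell$-adic representations then produces the desired Galois-equivariant isomorphism of semisimplifications, in each degree $i$ and hence on the total cohomology. The final clause is immediate: under the Tate and semisimplicity conjectures for $X\times_K Y$, the $\ell$-adic realisation from homological $K$-motives to $\Gal(\overline{K}/K)$-representations is fully faithful, so a Galois-equivariant isomorphism $H^*(\overline{X},\Q_\ell)\cong H^*(\overline{Y},\Q_\ell)$ lifts to an isomorphism of $K$-homological motives with rational coefficients. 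The delicate step is the weight separation, which is really where the input from Deligne's Weil II is used; without purity one could not recover the individual $\tr(\frob_\mathfrak{p}\mid H^i)$ from the alternating sum alone.
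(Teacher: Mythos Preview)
Your proof has the same ingredients as the paper's (which is a terse sketch citing Ito: smooth and proper base change, the Lefschetz trace formula, Deligne's purity, and Chebotarev), and the overall strategy is correct. However, the order in which you invoke purity and Chebotarev is off, and as written the weight-separation step does not go through.

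The hypothesis only gives you $|\mathcal{X}(\Oo_K/\mathfrak{p})|$, i.e.\ the trace of $\frob_\mathfrak{p}$ itself, not of its powers; so the parenthetical ``taking powers of $\frob_\mathfrak{p}$'' is not available to you at that stage. And the ``varying $N(\mathfrak{p})$'' version does not work either: from $\sum_i(-1)^i c_i(\mathfrak{p})=0$ with $|c_i(\mathfrak{p})|\le C\,N(\mathfrak{p})^{i/2}$ you cannot conclude $c_i(\mathfrak{p})=0$ for each $i$, since these bounds allow plenty of cancellation between adjacent weights. The correct order (and the one implicit in the paper and in Ito) is to apply Chebotarev \emph{first} to the virtual representation $\sum_i(-1)^i[H^i]$: density of Frobenii gives equality of the virtual characters on all of $\Gal(\overline K/K)$, hence by Brauer--Nesbitt an equality $\sum_i(-1)^i[H^i(\overline X,\Q_\ell)]=\sum_i(-1)^i[H^i(\overline Y,\Q_\ell)]$ in the Grothendieck group of semisimple $\ell$-adic representations. \emph{Then} Deligne's purity separates this by weight, because an irreducible $\ell$-adic representation pure of weight $i$ cannot also be pure of weight $j\ne i$; this yields $H^i(\overline X,\Q_\ell)^{\mathrm{ss}}\cong H^i(\overline Y,\Q_\ell)^{\mathrm{ss}}$ for every $i$. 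With this reordering your argument is complete and agrees with the paper's.
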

Notice that, in characteristic zero, the Tate conjecture predicts the agreement of numerical and homological equivalence (see for example \cite[5.4.2.2]{MR2115000}).

\begin{proof}
To prove the result we may use Chebotarev density theorem (see for example \cite[Theorem I.2.3]{serrebook}), which asserts that the Frobenii at $\mathfrak{p}$ are dense in the absolute Galois group of $K$. As in the proof of \cite[Proposition 1.2]{ito}, the lemma follows from the Weil conjectures and the smooth and proper base change theorems.
\end{proof}

\subsection{A remark on abelian varieties}
We conclude showing how the Tate conjecture for abelian varieties, as proven by Tate over finite fields and Faltings over number fields, can be used to show that derived equivalence implies an isogeny relation, confirming Conjecture \ref{conjmotives}.

Let $A,B/K$ be derived equivalent abelian varieties. In \cite{orlovab}, Orlov's first step to show that $A$ and $B$ are isogenous is to replace $K$ by one of its algebraic closures. We sketch how to reprove this result using the point of view we adopted in this note when $K$ is (the algebraic closure of) a finite field or a number field. Let $\ell$ a prime different from the characteristic of $K$. Thanks to the celebrated Isogeny theorem, it is enough to prove that there exists an isomorphism
\begin{displaymath}
H^1(\overline{A},\Q_\ell) \cong H^1(\overline{B},\Q_\ell)
\end{displaymath}
compatible with the Galois action. As observed by Honigs \cite[Lemma 3.1.]{honigs}, we have an isomorphism, compatible with the Frobenius action:
\begin{displaymath}
\bigoplus_{i= 1, \dots, \dim A} H^{2i-1}(\overline{A},\Q_\ell)(i-1) \cong \bigoplus_{i=1,\dots ,\dim A = \dim B} H^{2i-1}(\overline{B},\Q_\ell)(i-1) .
\end{displaymath}
Since the higher cohomological groups of an abelian variety are given by symmetric powers of the $H^1$, we deduce that $H^1(\overline{A},\Q_\ell) \cong H^1(\overline{B},\Q_\ell)$, proving the claim.

\bibliographystyle{alpha}
\bibliography{biblio.bib}

\Addresses

\end{document}